\documentclass{mpi2015-cscpreprint}

\usepackage[american]{babel}
\usepackage{graphicx,subfig,float,pgfplots}
\usepackage{amssymb,amsthm,amsmath}
\usepackage{multicol}
\usepackage{makecell,tabularx}

\newtheorem{theorem}{Theorem}[section]

\newtheorem{lemma}[theorem]{Lemma}

\newtheorem{definition}{Definition}[section]
\newtheorem{remark}{Remark}[section]

\newtheorem*{theorem*}{Theorem}

\numberwithin{equation}{section}

\begin{document}

\title{Stability and decay rate estimates for a nonlinear dispersed flow reactor model with boundary control}
  
\author[$1$]{Yevgeniia~Yevgenieva}
\affil[$1$]{Max Planck Institute for Dynamics of Complex Technical Systems,
              39106 Magdeburg, Germany\\ \newline
              Institute of Applied Mathematics and Mechanics, National Academy of Sciences of Ukraine, \newline 84116 Sloviansk, Ukraine\authorcr
  \email{yevgenieva@mpi-magdeburg.mpg.de}, \orcid{0000-0002-1867-3698}}
  
\author[$2$]{Alexander~Zuyev}
\affil[$2$]{Max Planck Institute for Dynamics of Complex Technical Systems,
              39106 Magdeburg, Germany\\ \newline
              Institute of Applied Mathematics and Mechanics, National Academy of Sciences of Ukraine, \newline 84116 Sloviansk, Ukraine\authorcr
  \email{zuyev@mpi-magdeburg.mpg.de}, \orcid{0000-0002-7610-5621}}

\author[$3$]{\, Peter~Benner}
\affil[$3$]{Max Planck Institute for Dynamics of Complex Technical Systems,
              39106 Magdeburg, Germany\authorcr
  \email{benner@mpi-magdeburg.mpg.de}, \orcid{0000-0003-3362-4103}}
  
\shorttitle{Stability and decay rate...}
\shortauthor{Ye.~Yevgenieva, A.~Zuyev, P.~Benner}
\shortdate{}
  
\keywords{chemical reaction model, boundary control, nonlinear parabolic partial differential equations, existence and uniqueness of solutions, exponential stability, decay rate of solutions, feedback control design}

\msc{49K40, 93D23, 47D03, 35G31}
  
\abstract{We investigate a nonlinear parabolic partial differential equation whose boundary conditions contain a single control input. This model describes a chemical reaction of the type ``$A \to $ product'', occurring in a dispersed flow tubular reactor. The existence and uniqueness of solutions to the nonlinear Cauchy problem under consideration are established by applying the theory of strongly continuous semigroups of operators. We also prove the stability of the equilibrium of the closed-loop system with a proposed feedback law. Additionally, using Lyapunov's direct method, we evaluate the exponential decay rate of the solutions.}

\novelty{The key contribution of our work is the following:
\begin{itemize}
\item A class of feedback controllers is designed to achieve exponential stability of the steady-state solution in the closed-loop system.
\item The existence and uniqueness of solutions of the model with the proposed controllers are proved.
\item An evaluation of the solution's decay rate is presented.
\item Numerical simulations are presented to illustrate the large-time behavior of  solutions to the closed-loop system under consideration.
\end{itemize}}

\maketitle

\section{Introduction}

In recent years, there has been significant research into chemical reaction models, particularly in the context of boundary control, which provides a natural means of regulating the process. In tubular reactors, boundary control focuses on manipulating inlet concentrations, temperatures, and flow rates to adjust the reaction rate and maintain the desired process performance.

The development of control strategies for optimizing and stabilizing chemical reactions is crucial in chemical engineering. Various papers \cite{BosKr2002,SH13,YevZuBenS-M2024,ZS-MB2017} have explored optimal control strategies aimed at maximizing product yield while utilizing a fixed amount of reactants. 

Alongside optimization, the qualitative properties of the system, such as the existence and uniqueness of solutions, as well as stability, are essential for ensuring consistent and reliable operation under varying conditions. Stability is particularly significant as it ensures the system's ability to rapidly converge to a stable operating state following perturbations or disturbances, contributing to improved process efficiency, product quality, and safety.

In this paper, we consider the mathematical model of a chemical reaction conducted in a dispersed flow tubular reactor (DFTR). Earlier studies have explored the nonlinear non-isothermal DFTR model, which accounts for temperature variations. In particular, the existence and uniqueness of solutions for this model were established in \cite{LaabAchWinDoch2001}. Furthermore, exponential stability for this model was proven in \cite{HasWinDoch2020} using nonlinear semigroup theory and the analysis of $m$-dissipative operators. 

Moreover, the exponential stabilization of several classes of linear and nonlinear parabolic equations with boundary control has been investigated in numerous papers (see, e.g., \cite{WangWuSun2014,WangWuSun2017,WeiLi2021} and references therein).

Our study considers a nonlinear isothermal DFTR model with a chemical reaction of arbitrary nonnegative order $n$, in contrast to the case $n=1$ studied in \cite{HasWinDoch2020, LaabAchWinDoch2001}.

\section{Basic notations and auxiliary results}

This section presents the notations and auxiliary theorems used throughout the paper.

\vskip 2mm
\noindent\textbf{Basic notations}
\vskip 2mm
\hskip -6mm
\begin{tabularx}{\textwidth}{rX}
$C^{2}[a,b]$ & the set of continuous functions on $[a,b]$ with continuous derivatives up to the second order;\\
$L^{1}(a,b)$ & the space of all measurable real functions on $(a,b)$ for which the Lebesgue integral $\int_a^b f(x)\,dx$ exists and is finite;\\
$L^{2}(a,b)$ & the Hilbert space of all measurable real functions on $(a,b)$ whose squared absolute values are Lebesgue integrable;\\
$||f||_{L^{2}(a,b)}$ & $:=\left(\int_a^b|f(x)|^{2}\,dx\right)^{\frac{1}{2}}$, the norm in $L^{2}(a,b)$; \\
$\left<f,g\right>_{L^2(a,b)}$ & $:=\int_{a}^{b}f(x)g(x)\,dx$, the inner product in $L^2(a,b)$;\\
$L^{\infty}(a,b)$ & the space of all measurable real functions on $(a,b)$ that are bounded almost everywhere on $(a,b)$;\\
$H^{2}(a,b)$ & the Hilbert space of functions in $L^{2}(a,b)$ whose weak derivatives up to the second order also belong to $L^{2}(a,b)$.
\end{tabularx}

In this paper, we will employ the theory of strongly continuous semigroups ($C_0$ semigroups). For an introduction, we refer the reader to~\cite{Pazy}. The following theorems serve as the main tools in our analysis.

\begin{theorem}[\textbf{Lumer--Phillips theorem \cite[Section 1.4]{Pazy}}]\label{th0}
Let $\mathcal{A}: D(A)\to X$ be a linear operator defined on a subset $D(\mathcal{A})$ of a Banach space $X$. Then $\mathcal{A}$ generates a strongly continuous semigroup of operators on $X$ if and only if:
\begin{itemize}
    \item[(i)] $D(\mathcal{A})$ is dense in $X$;
    \item[(ii)] $\mathcal{A}$ is dissipative;
    \item[(iii)] $R(\mathcal{A}-\lambda\mathcal{I})=X$ for some $\lambda>0$, where $R$ denotes the range of the operator and $\mathcal{I}$ denotes the identity operator.
\end{itemize}
\end{theorem}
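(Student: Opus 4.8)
The plan is to reduce the assertion to the Hille--Yosida generation theorem, which characterizes generators of $C_0$ semigroups of contractions through a resolvent estimate, and then to verify that conditions (i)--(iii) are precisely equivalent to the Hille--Yosida hypotheses. The workhorse of the argument is the elementary but crucial equivalence: a linear operator $\mathcal{A}$ is dissipative if and only if $\|(\lambda\mathcal{I}-\mathcal{A})x\| \geq \lambda\|x\|$ for every $x \in D(\mathcal{A})$ and every $\lambda > 0$. I would establish this first. For each $x$ one invokes the Hahn--Banach theorem to obtain a normalized duality functional $x^* \in X^*$ with $\langle x, x^*\rangle = \|x\|^2 = \|x^*\|^2$, so that dissipativity $\operatorname{Re}\langle \mathcal{A}x, x^*\rangle \leq 0$ yields $\|(\lambda\mathcal{I}-\mathcal{A})x\|\,\|x\| \geq \operatorname{Re}\langle(\lambda\mathcal{I}-\mathcal{A})x, x^*\rangle = \lambda\|x\|^2 - \operatorname{Re}\langle \mathcal{A}x, x^*\rangle \geq \lambda\|x\|^2$, and the converse follows by reversing this chain.

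For the necessity direction ($\Rightarrow$), I would assume $\mathcal{A}$ generates a contraction semigroup $(T(t))_{t\geq 0}$. Density of $D(\mathcal{A})$ is a standard structural property of generators. Dissipativity follows by differentiating at $t = 0^+$: for $x \in D(\mathcal{A})$ and a duality functional $x^*$, the contraction bound gives $\operatorname{Re}\langle T(t)x, x^*\rangle \leq \|x\|^2$, whence $\operatorname{Re}\langle \mathcal{A}x, x^*\rangle = \lim_{t\to 0^+} t^{-1}\operatorname{Re}\langle T(t)x - x, x^*\rangle \leq 0$. The range condition (iii) holds for every $\lambda > 0$ because Hille--Yosida places the whole half-line $(0,\infty)$ inside the resolvent set $\rho(\mathcal{A})$.

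For the sufficiency direction ($\Leftarrow$), which is the substantive part, I would combine the dissipativity estimate with the single range hypothesis. The estimate $\|(\lambda\mathcal{I}-\mathcal{A})x\| \geq \lambda\|x\|$ shows that $\lambda\mathcal{I}-\mathcal{A}$ is injective and, wherever its range is all of $X$, admits a bounded inverse with $\|(\lambda\mathcal{I}-\mathcal{A})^{-1}\| \leq 1/\lambda$; together with (iii) this places the given $\lambda$ in $\rho(\mathcal{A})$. Closedness of $\mathcal{A}$ then follows from the existence of a bounded, everywhere-defined inverse of $\lambda\mathcal{I}-\mathcal{A}$.

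The hard part, and the step I would treat most carefully, is upgrading the range condition from the single value of $\lambda$ supplied by (iii) to all $\lambda > 0$, as Hille--Yosida requires. I would argue that the set $\Lambda = \{\lambda > 0 : R(\lambda\mathcal{I}-\mathcal{A}) = X\}$ is simultaneously open and closed in $(0,\infty)$: openness comes from a Neumann-series perturbation argument exploiting the uniform resolvent bound $1/\lambda$, and closedness from the same bound together with the completeness of $X$. Since $\Lambda$ is nonempty by (iii) and $(0,\infty)$ is connected, we conclude $\Lambda = (0,\infty)$. At that point the Hille--Yosida hypotheses — $\mathcal{A}$ densely defined and closed, $(0,\infty) \subset \rho(\mathcal{A})$, and $\|(\lambda\mathcal{I}-\mathcal{A})^{-1}\| \leq 1/\lambda$ — are all in hand, and that theorem delivers the conclusion that $\mathcal{A}$ generates a strongly continuous semigroup of contractions.
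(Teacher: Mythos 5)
This statement is not proved in the paper at all: it is quoted as an auxiliary tool from the literature (with the citation to Pazy, Section~1.4), and the paper only \emph{uses} it, in the proof of Lemma~\ref{th1}. So there is no in-paper argument to compare against; your proposal has to be judged against the standard textbook proof, which it essentially reproduces. Your reduction to the Hille--Yosida theorem is exactly Pazy's own route: dissipativity yields $\|(\lambda\mathcal{I}-\mathcal{A})x\|\geqslant\lambda\|x\|$ for all $\lambda>0$, hence injectivity and the uniform resolvent bound $1/\lambda$ wherever the range is all of $X$; the single $\lambda$ supplied by (iii) then gives $\lambda\in\rho(\mathcal{A})$ and closedness of $\mathcal{A}$; and the open-and-closed (connectedness) argument propagates the range condition from one point to all of $(0,\infty)$, at which point Hille--Yosida delivers the contraction semigroup. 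All of these steps are sound, and the necessity direction (density, dissipativity via differentiating $\operatorname{Re}\left<T(t)x,x^*\right>$ at $t=0^+$, and the range condition from $(0,\infty)\subset\rho(\mathcal{A})$) is also correct.

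Two caveats are worth flagging. First, what you prove (and what Lumer--Phillips actually says) is an equivalence with generation of a \emph{contraction} semigroup: your necessity argument explicitly uses $\|T(t)\|\leqslant1$, and the ``only if'' direction is false for general $C_0$ semigroups (take $\mathcal{A}=\mathcal{I}$, which generates $e^{t}\mathcal{I}$ but is not dissipative). The paper's wording omits ``of contractions'', so your silent restriction is the right correction of the statement, but it should be made explicit rather than slipped in. Second, your remark that the converse of the equivalence between dissipativity and the estimate $\|(\lambda\mathcal{I}-\mathcal{A})x\|\geqslant\lambda\|x\|$ ``follows by reversing this chain'' is too quick: that direction requires constructing a suitable duality functional as a weak-$*$ cluster point of normalized functionals for $(\lambda\mathcal{I}-\mathcal{A})x$ as $\lambda\to\infty$ (Banach--Alaoglu), not a reversal of inequalities. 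Fortunately, you never use that direction --- necessity obtains dissipativity directly from the contraction bound --- so this is a blemish in a side remark, not a gap in the proof.
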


\begin{theorem}[\textbf{\cite[Chapter 2, Proposition 5.3]{LiYong}}]\label{LiYong}
Let the linear operator $\mathcal{A}:D(\mathcal{A})\to X$ generate a $C_0$ semigroup $\{e^{t\mathcal{A}}\}_{t\geqslant0}$ and let $f:X\to X$ be the operator satisfying the following:
\begin{equation}\label{li_yong_cond}
\begin{aligned}
\|f(x_1)-f(x_2)\|_X&\leqslant \mu(t)\|x_1-x_2\|_X,\quad&&\forall\,t\in[0,T],\ \forall\,x_1,x_2\in X,
\\\|f(0)\|_X&\leqslant \mu(t),\quad&&\forall\,t\in[0,T],
\end{aligned}
\end{equation}
with some function $\mu\in L^1(0,T)$. Then, the equation
\begin{equation*}
y(t)=e^{t\mathcal{A}}y_0+\int_0^t e^{(t-s)\mathcal{A}}f(y(s))ds,\quad t\in[0,T]
\end{equation*}
admits a unique solution $y$ for any $y_0\in X$.
\end{theorem}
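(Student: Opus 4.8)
The plan is to realize the solution as the unique fixed point of the integral (solution) operator on the Banach space $C([0,T];X)$ of continuous $X$-valued functions, via a contraction-mapping argument. First I would define
\[
(\mathcal{F}y)(t) := e^{t\mathcal{A}}y_0 + \int_0^t e^{(t-s)\mathcal{A}} f(y(s))\, ds, \qquad t\in[0,T],
\]
and check that $\mathcal{F}$ maps $C([0,T];X)$ into itself. The essential input here is that a $C_0$ semigroup is uniformly bounded on the compact interval $[0,T]$, say $\|e^{t\mathcal{A}}\|\le M_T$ for $t\in[0,T]$. Combined with the hypotheses \eqref{li_yong_cond}, which give $\|f(y(s))\|_X \le \mu(s)(1+\|y(s)\|_X)$ by the triangle inequality, this makes the integrand Bochner-integrable (it is dominated by $M_T\mu(s)(1+\|y\|_{C([0,T];X)})\in L^1(0,T)$), so $\mathcal{F}y$ is well defined; continuity of $t\mapsto (\mathcal{F}y)(t)$ then follows from the strong continuity of the semigroup together with dominated convergence.

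The heart of the argument is the Lipschitz estimate. For $y_1,y_2\in C([0,T];X)$, the Lipschitz bound on $f$ yields
\[
\|(\mathcal{F}y_1)(t) - (\mathcal{F}y_2)(t)\|_X \le M_T \int_0^t \mu(s)\,\|y_1(s)-y_2(s)\|_X\, ds.
\]
Because $\mu$ lies only in $L^1(0,T)$ rather than $L^\infty$, I would not expect $\mathcal{F}$ itself to be a contraction for a fixed $T$; this is the one point that needs care. The cleanest remedy is a weighted supremum norm: writing $M(t):=\int_0^t\mu(s)\,ds$, equip $C([0,T];X)$ with the equivalent norm $\|y\|_*:=\sup_{t\in[0,T]} e^{-\beta M(t)}\|y(t)\|_X$. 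Substituting into the estimate above and using $\int_0^t \mu(s)e^{\beta M(s)}\,ds = \beta^{-1}(e^{\beta M(t)}-1)\le \beta^{-1}e^{\beta M(t)}$ gives $\|\mathcal{F}y_1-\mathcal{F}y_2\|_* \le (M_T/\beta)\,\|y_1-y_2\|_*$, so choosing $\beta>M_T$ makes $\mathcal{F}$ a genuine contraction. Equivalently, iterating the raw estimate produces the factor $(M_T M(t))^k/k!$, so some power $\mathcal{F}^k$ is a contraction and the fixed-point theorem for iterates applies.

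By the Banach fixed-point theorem, $\mathcal{F}$ then has a unique fixed point $y\in C([0,T];X)$, which is by construction the unique solution of the integral equation for the given $y_0$. I would expect the main obstacle to be precisely the $L^1$-regularity of $\mu$: it rules out the naive small-time contraction and forces either the weighted-norm device or the iterated-contraction estimate, and it also demands some attention when verifying Bochner-integrability of the integrand and the continuity of $\mathcal{F}y$ in $t$, since the Lipschitz "constant" is now merely integrable rather than bounded in time.
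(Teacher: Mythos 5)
The paper does not prove this statement at all: it is imported verbatim, with citation, from \cite[Chapter 2, Proposition 5.3]{LiYong} and then used as a black box in the proof of Theorem~\ref{th2}. There is therefore no internal proof to compare against; the relevant comparison is with the standard argument in the literature, and your proposal is precisely that argument, correctly executed. The solution operator $\mathcal{F}$ on $C([0,T];X)$, the bound $\sup_{t\in[0,T]}\|e^{t\mathcal{A}}\|\leqslant M_T$ (valid for every $C_0$ semigroup on a compact interval), the growth estimate $\|f(x)\|_X\leqslant \mu(t)\bigl(1+\|x\|_X\bigr)$, and the passage to a contraction either through the Bielecki-type norm $\|y\|_*=\sup_{t}e^{-\beta M(t)}\|y(t)\|_X$ or through the $\bigl(M_T M(t)\bigr)^k/k!$ bound on iterates of $\mathcal{F}$ are exactly the expected ingredients; the identity $\int_0^t \mu(s)e^{\beta M(s)}\,ds=\beta^{-1}\bigl(e^{\beta M(t)}-1\bigr)$ that your weighted estimate relies on is legitimate because $M$ is absolutely continuous. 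One small observation: in the form stated in this paper the nonlinearity $f:X\to X$ does not depend on $t$, while hypothesis \eqref{li_yong_cond} is required to hold for \emph{every} $t\in[0,T]$; hence one may fix any $t_0$ with $\mu(t_0)<\infty$ (such $t_0$ exists since $\mu\in L^1(0,T)$) and conclude that $f$ is globally Lipschitz with the constant $\mu(t_0)$. The $L^1$ subtlety you carefully circumvent is thus vacuous for the statement as written here --- a plain small-interval contraction would already suffice --- and becomes genuine only in the time-dependent setting $f=f(t,x)$ of the cited source, which your proof in fact covers without change.
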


\section{Description of the model}

Consider the mathematical model of a DFTR represented by the nonlinear parabolic equation with the following Danckwerts boundary conditions~\cite{NauMal1983,SH13}:
\begin{equation}\label{dftr}
\begin{aligned}
    \frac{\partial C_A}{\partial t}&=D_{ax}\frac{\partial^2C_A}{\partial x^2} -v\frac{\partial C_A}{\partial x}-kC_A^n, \qquad (x,t)\in(0,l)\times(0,T),
\\ C_A(0,t)&=C_{A_0}(t)+\frac{D_{ax}}{v}\frac{\partial C_A}{\partial x}\Big|_{x=0}, \quad
\frac{\partial C_A}{\partial x}\Big|_{x=l}=0,
\end{aligned}
\end{equation}
where $C_A(x,t)\geqslant0$ is the reactant $A$ concentration inside the reactor at the distance $x$ from the inlet and at time $t$, $l$ is the length of the reactor tube, $C_{A_0}(t)$ is the concentration of $A$ in the inlet stream (that also contains another inert component), $n>0$ is the reaction order, $D_{ax}>0$ is the axial dispersion coefficient, $v>0$ is the flow-rate of the reaction stream, and $k>0$ is the reaction rate constant. The function $C_{A_0}(t)\geqslant0$ is treated as the control input.

In order to rewrite the problem in an abstract form, we present the steady-state solution $\overline{C}_A\in C^2[0,l]$ as a solution to the problem:
\begin{equation}\label{dftr-ss}
\begin{aligned}
    D_{ax}\frac{d^2\overline{C}_A}{dx^2}&=v\frac{d \overline{C}_A}{d x}+k\overline{C}_A^n,\qquad x \in(0,l),
\\ \overline{C}_A(0)&=\overline{u} +\frac{D_{ax}}{v}\frac{d \overline{C}_A}{dx}\Big|_{x=0}, \qquad
\frac{d \overline{C}_A}{dx}\Big|_{x=l}=0,
\end{aligned}
\end{equation}
where $\overline{u} = \text{const} > 0$ denotes the steady-state control, selected to ensure that the solution $\overline{C}_A(x)$ remains non-negative, i.e., $\overline{C}_A(x) \geqslant 0$ for all $x \in [0, l]$.

In the following discussion, we reformulate the problem expressed in~\eqref{dftr} in terms of deviations by introducing the function $w(x,t)=C_A(x,t)-\overline{C}_A(x)$:
\begin{equation}\label{dftr1}
\begin{aligned}
    \frac{\partial w}{\partial t}&=D_{ax}\frac{\partial^2w}{\partial x^2} -v\frac{\partial w}{\partial x}+k\overline{C}_A^n -k (w+\overline{C}_A)^n,
\qquad (x,t)\in(0,l)\times(0,T),
\\ w(0,t)&=u_w(t)+\frac{D_{ax}}{v}\frac{\partial w}{\partial x}\Big|_{x=0}, \quad
\frac{\partial w}{\partial x}\Big|_{x=l}=0,
\end{aligned}
\end{equation}
where $u_w(t):=C_{A_0}(t)-\overline{u}$. 

We will investigate the above problem with a boundary feedback control $u_w$ in the form
\begin{equation}\label{control}
u_w(t)=\alpha\,w(0,t),\quad \alpha\in\left[0,\tfrac{1}{2}\right].
\end{equation}

\begin{remark}
The parameterized class of controls \eqref{control} also includes the particular case $u_w(t)\equiv0$, which represents the steady-state control $\overline{u}$ for the initial problem \eqref{dftr}.
\end{remark}

Since the concentration $C_A(x,t)$ is nonnegative and bounded above by a constant due to physical considerations, the reaction model -- particularly the reaction rate -- should also satisfy certain constraints in terms of the deviation $w(x,t)$. For this purpose, we introduce the saturation function of $w$ as follows:
\begin{equation*}
\text{Sat}_M(w):=
\begin{cases}
w,\quad &\text{if }|w|\leqslant M,
\\ M,\quad &\text{if }w>M,
\\-M, \quad &\text{if }w<-M,
\end{cases}
\end{equation*}
where $M<\infty$ is a positive constant.

To study the well-posedness of the considered model, we rewrite system~\eqref{dftr1} in an abstract form.
First, we introduce the operator $\mathcal{A}:D(\mathcal{A})\to X$ such that
\begin{equation}\label{opA}
    \mathcal{A}:{\xi}\in D(\mathcal{A}) \mapsto \mathcal{A}{\xi} =
            (D_{ax}\xi'' -v\xi')\in X,
\end{equation}
where $X=L^2(0,l)$, $\xi'(x):=\frac{d\xi}{dx}$ denotes the derivative, and, for any $\alpha\in\left[0,\tfrac{1}{2}\right]$,
\begin{equation*}
D(\mathcal{A})=D_\alpha(\mathcal{A}):=\left\{\xi\in H^2(0,l):(1-\alpha)\xi(0)=\frac{D_{ax}}{v}\xi'(0),\ 
\xi'(l)=0\right\},
\end{equation*}
where $\xi'(x_0):=\frac{d\xi}{dx}\big|_{x=x_0}$.

Consider also the nonlinear operator $r:X\to X$ defined by the following relation:
\begin{equation*}
    r({\xi})=k\overline{C}_A^n-k (\text{Sat}_M(\xi)+\overline{C}_A)^n.
\end{equation*}
Now, the problem \eqref{dftr1} can be rewritten in the abstract form for $\xi(t)=w(\cdot,t)$ as follows:
\begin{equation}\label{eq1.5}
\begin{aligned}
 \dot{{\xi}}(t)&=\mathcal{A}\,{\xi}(t)+r({\xi}(t)), \\\xi(0)&=\xi_0\in D(\mathcal{A}).   
\end{aligned}
\end{equation}

\section{Main results}

\subsection{Existence and uniqueness of solution to the problem \eqref{eq1.5}}

We use the strongly continuous semigroup theory to prove the existence and uniqueness of the mild solution to the problem \eqref{eq1.5}. First, we obtain the semigroup generation property for the linear operator $\mathcal{A}$.

\begin{lemma}\label{th1}
The operator $\mathcal{A}:D(\mathcal{A})\to X$, defined by the relation \eqref{opA}, generates the $C_0$--semigroup $\{e^{t\mathcal{A}}\}_{t\geqslant0}$ of bounded linear operators on $X$.
\end{lemma}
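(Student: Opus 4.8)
The plan is to verify the three hypotheses of the Lumer--Phillips theorem (Theorem~\ref{th0}) for $\mathcal{A}$ acting on $X=L^2(0,l)$. Density (condition (i)) is immediate: every $\varphi\in C_c^\infty(0,l)$ satisfies $\varphi(0)=\varphi'(0)=0$ and $\varphi'(l)=0$, so both boundary relations defining $D_\alpha(\mathcal{A})$ hold trivially. Thus $C_c^\infty(0,l)\subset D(\mathcal{A})$, and since the former is dense in $L^2(0,l)$, so is $D(\mathcal{A})$.

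For dissipativity (condition (ii)) I would integrate by parts twice in $\langle\mathcal{A}\xi,\xi\rangle$. Using $\xi'(l)=0$ and substituting $\xi'(0)=\frac{v(1-\alpha)}{D_{ax}}\xi(0)$ from the boundary relation $(1-\alpha)\xi(0)=\frac{D_{ax}}{v}\xi'(0)$, the boundary contributions collapse and one is left with
\begin{equation*}
\langle\mathcal{A}\xi,\xi\rangle_{L^2(0,l)}=v\Big(\alpha-\tfrac12\Big)\xi(0)^2-D_{ax}\|\xi'\|_{L^2(0,l)}^2-\tfrac{v}{2}\,\xi(l)^2.
\end{equation*}
Since $\alpha\in[0,\tfrac12]$ forces $\alpha-\tfrac12\le 0$, every term on the right-hand side is nonpositive, so $\langle\mathcal{A}\xi,\xi\rangle_{L^2(0,l)}\le 0$ and $\mathcal{A}$ is dissipative. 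This is exactly where the admissible range $\alpha\in[0,\tfrac12]$ enters, and no preliminary shift $\mathcal{A}-\omega\mathcal{I}$ is needed.

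For the range condition (condition (iii)) I would fix some $\lambda>0$ and show that for every $f\in L^2(0,l)$ the two-point boundary value problem $D_{ax}\xi''-v\xi'-\lambda\xi=f$, $(1-\alpha)\xi(0)=\frac{D_{ax}}{v}\xi'(0)$, $\xi'(l)=0$, admits a solution $\xi\in D(\mathcal{A})$. The cleanest route is a weak formulation on $H^1(0,l)$: the associated bilinear form is bounded, and its coercivity holds once $\lambda$ is taken large enough, the convection term $v\xi'$ being absorbed by Young's inequality into the $D_{ax}\|\xi'\|^2$ and $\lambda\|\xi\|^2$ contributions. The Lax--Milgram theorem then yields a unique weak solution, and elliptic regularity upgrades it to $H^2(0,l)$ while recovering the boundary conditions in the strong sense. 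Since the coefficients are constant, an equivalent explicit alternative is to build the general solution from the fundamental system $e^{r_\pm x}$, with $r_\pm$ the roots of $D_{ax}r^2-vr-\lambda=0$, add a variation-of-parameters particular solution, and check that the two boundary conditions determine the free constants uniquely via a nonvanishing $2\times2$ determinant.

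I expect condition (iii) to be the main obstacle, since it is the only step requiring genuine solvability of a boundary value problem rather than a direct estimate, and the non-self-adjointness introduced by the first-order convection term $-v\xi'$ together with the Robin-type condition at $x=0$ must be handled with care when establishing coercivity (or, equivalently, the nonvanishing of the determinant). Once (i)--(iii) are verified, Theorem~\ref{th0} yields that $\mathcal{A}$ generates the $C_0$-semigroup $\{e^{t\mathcal{A}}\}_{t\geqslant0}$ on $X$, as claimed.
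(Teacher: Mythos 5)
Your proposal is correct, and it follows the paper's overall strategy (verifying the three Lumer--Phillips conditions) with the same dissipativity computation: your identity $\langle\mathcal{A}\xi,\xi\rangle_{L^2(0,l)}=v\left(\alpha-\tfrac12\right)\xi(0)^2-D_{ax}\|\xi'\|_{L^2(0,l)}^2-\tfrac{v}{2}\xi(l)^2$ is exactly the paper's, with the same observation that $\alpha\in\left[0,\tfrac12\right]$ is what makes the boundary term nonpositive. For density, the paper does not simply cite the density of compactly supported smooth functions; it reproves it via a mollifier construction (following Brezis) for the subset $D_0=\{\xi\in H^2(0,l):\xi(0)=\xi'(0)=\xi'(l)=0\}\subset D(\mathcal{A})$ --- your one-line argument via $C_c^\infty(0,l)\subset D(\mathcal{A})$ is the same idea, just streamlined. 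The genuine difference is in the range condition: the paper solves $D_{ax}\xi''-v\xi'-\lambda\xi=\eta$ explicitly by variation of parameters and asserts that the two boundary conditions determine the free constants $C_3,C_4$ through a $2\times2$ algebraic system, but it never verifies that this system is solvable (i.e., that the determinant is nonzero), which is a small gap in the paper's argument. Your primary route --- a weak formulation on $H^1(0,l)$ with the Robin term $v(1-\alpha)\xi(0)\psi(0)$ entering the bilinear form, coercivity for $\lambda$ large (indeed, for any $\lambda>0$, since the convection term integrates to $\tfrac{v}{2}(\xi(l)^2-\xi(0)^2)$ and combines favorably with the boundary term when $\alpha\leqslant\tfrac12$), Lax--Milgram, and elliptic regularity to recover $\xi\in H^2(0,l)$ with the boundary conditions holding as natural conditions --- closes precisely this point and is more robust (it would survive variable coefficients), at the cost of being less explicit; the paper's construction, once the determinant check is added (as your explicit alternative correctly flags), yields a concrete resolvent formula. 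Both are valid proofs of condition (iii).
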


\begin{proof}
The proof of this lemma boils down to verifying the conditions of Theorem~\ref{th0}.

First, we check the condition $(ii)$ and show that the operator $\mathcal{A}$ is dissipative, namely, that $\left<\mathcal{A}\xi,\xi\right>_{L^2(0,l)}\leqslant0$ $\forall\,\xi\in D(\mathcal{A})$: 
\begin{equation*}
\begin{aligned}
\left<\mathcal{A}\xi,\xi\right>_{L^2(0,l)}&=\int_0^l (D_{ax}\xi'' -v\xi')\xi\,dx
=D_{ax}\xi\xi'\big|_0^l-D_{ax}\int_0^l(\xi')^2dx-\frac{v}{2}\int_0^l(\xi^2)'dx
\\&=-D_{ax}\xi(0)\xi'(0)-D_{ax}\int_0^l(\xi')^2dx-\frac{v}{2}\xi^2(l)+\frac{v}{2}\xi^2(0)
\\&=-v\left(\frac{1}{2}-\alpha\right)\xi^2(0)-D_{ax}\int_0^l(\xi')^2dx-\frac{v}{2}\xi^2(l)\leqslant0
\end{aligned}
\end{equation*}
for all $\xi\in D(\mathcal{A})$.

Now, we prove that $D(\mathcal{A})$ is dense in $L^2(0,l)$ (condition $(i)$). 
First, we introduce the set 
$$D_0:=\left\{\xi\in H^2(0,l):\xi(0)=\xi'(0)=\xi'(l)=0\right\}$$
and prove that $D_0$ is dense in $L^2(0,l)$. Then, the obvious embedding $D_0\subset D(\mathcal{A})\subset L^2(0,l)$ ensures that $D(\mathcal{A})$ is dense in $L^2(0,l)$.

The denseness proof is similar to the proof of Corollary~4.23 in~\cite{brezis2010}.
Given $\xi\in L^2(0,l)$, we set
\begin{equation*}
\bar{\xi}=
\begin{cases}
\xi(x),\quad&\text{if }x\in(0,l),
\\0,&\text{if }x\in\mathbb{R}\setminus(0,l),
\end{cases}
\end{equation*}
so that $\bar\xi\in L^2(\mathbb{R})$.

Let $\{K_n\}$ be a sequence of intervals such that $K_n:=\left(\frac{2}{n},l-\frac{2}{n}\right)$. 
Let $\{\rho_n\}$ be a sequence of mollifiers such that
\begin{equation*}
\rho_n\in C^{\infty}_c(\mathbb{R}),\quad \text{supp}\,\rho_n\subset\left(-\frac{1}{n},\frac{1}{n}\right),
\quad\int_{\mathbb{R}}\rho_n(x)dx=1, \quad \rho_n(x)\geqslant0\ \forall\,x\in\mathbb{R}.    
\end{equation*}
Set $g_n=\chi_{K_n}\bar\xi$ and
\begin{equation*}
\xi_n=\rho_n * g_n:=\int_{\mathbb{R}}\rho_n(x-y)\,g_n(y)dy,
\end{equation*}
so that $\text{supp}\,\xi_n\subset\left(\frac{1}{n},l-\frac{1}{n}\right)$. It follows that $\xi_n\in C^{\infty}_c(\mathbb{R})$ and, moreover, $\xi_n(0)=\xi'_n(0)=\xi'_n(l)=0$, so $\xi_n\in D_0$.

On the other hand, we have
\begin{equation*}
||\xi_n-\xi||_{L^2(0,l)}=||\xi_n-\bar\xi||_{L^2(\mathbb{R})}
\leqslant ||(\rho_n * g_n)-(\rho_n * \bar\xi)||_{L^2(\mathbb{R})}+||(\rho_n * \bar\xi)-\bar\xi||_{L^2(\mathbb{R})}.
\end{equation*}
By Young's convolution equation, we get
\begin{equation*}
||(\rho_n * g_n)-(\rho_n * \bar\xi)||_{L^2(\mathbb{R})}\leqslant ||\rho_n||_{L^1(\mathbb{R})} ||g_n-\bar\xi||_{L^2(\mathbb{R})},
\end{equation*}
which leads to 
\begin{equation*}
||\xi_n-\xi||_{L^2(0,l)}\leqslant ||g_n-\bar\xi||_{L^2(\mathbb{R})}+||(\rho_n * \bar\xi)-\bar\xi||_{L^2(\mathbb{R})}.
\end{equation*}
We note that $||g_n-\bar\xi||_{L^2(\mathbb{R})}\to0$ according to the definition of the sequence $g_n$ and $||(\rho_n * \bar\xi)-\bar\xi||_{L^2(\mathbb{R})}\to0$ by Theorem~4.22 from~\cite{brezis2010}.
Then we conclude that  $||\xi_n-\xi||_{L^2(0,l)}\to 0$ as $n\to\infty$.

Finally, we check the condition $(iii)$. Specifically, we need to prove that 
\begin{equation*}
\forall\,\eta\in X\ \exists\,\lambda>0,\ \xi\in D(\mathcal{A}):\ \mathcal{A}\xi-\lambda\xi=\eta.
\end{equation*}
We start with solving equation $\mathcal{A}\xi-\lambda\xi=\eta$ for an arbitrary $\eta\in X$:
\begin{equation}\label{eq1.6}
D_{ax}\xi''(x) -v\xi'(x)-\lambda\xi(x)=\eta(x).
\end{equation}
The solution takes the following form:
\begin{equation*}
\xi(x)=C_1(x)e^{\nu_1x}+C_2(x)e^{\nu_2x},
\end{equation*}
where $\nu_1$ and $\nu_2$ are the roots of the characteristic equation $D_{ax}\nu^2 -v\nu-\lambda=0$ with some $\lambda>0$, $\nu_1\neq\nu_2$.
Here, $C_1(\cdot)$, $C_2(\cdot)$ are the functions satisfying the system:
\begin{equation*}
\begin{cases}
C_1'(x)e^{\nu_1x}+C_2'(x)e^{\nu_2x}=0\\
\nu_1C_1'(x)e^{\nu_1x}+\nu_2C_2'(x)e^{\nu_2x}=\frac{\eta(x)}{D_{ax}}.
\end{cases}
\end{equation*}
After solving the latter system, we obtain the solution in the form
\begin{equation}\label{eq1.7}
\xi(x)=\frac{1}{D_{ax}(\nu_2-\nu_1)}\int_0^x\eta(s)\left[e^{\nu_2(x-s)}-e^{\nu_1(x-s)}\right]ds
+C_3e^{\nu_1x}+C_4e^{\nu_2x},
\end{equation}
where the constants $C_3$ and $C_4$ are defined by the  boundary conditions and satisfy the algebraic system:
\begin{equation*}
\begin{cases}
\left(\tfrac{D_{ax}}{v}\nu_1-1+\alpha\right)C_3+\left(\tfrac{D_{ax}}{v}\nu_2-1+\alpha\right)C_4=0,
\\\nu_1e^{\nu_1l}C_3+\nu_2e^{\nu_2l}C_4=\frac{I(l)}{D_{ax}(\nu_1-\nu_2)},
\end{cases}
\end{equation*}
where $I(x):=\int_0^x\eta(s)\left[\nu_2e^{\nu_2(x-s)}-\nu_1e^{\nu_1(x-s)}\right]ds$. Note that
\begin{equation*}
\xi'(x)=\frac{1}{D_{ax}(\nu_2-\nu_1)}\int_0^x\eta(s)\left[\nu_2e^{\nu_2(x-s)}-\nu_1e^{\nu_1(x-s)}\right]ds
+C_3\nu_1e^{\nu_1x}+C_4\nu_2e^{\nu_2x},
\end{equation*}
so $\xi''\in L^2(0,l)$, which means that $\xi\in H^2(0,l)$ for all functions $\eta\in L^2[0,l]$. Moreover, the solution $\xi$ is constructed to satisfy the imposed boundary conditions, so it belongs to $D(\mathcal{A})$. Now we can state that, for every function $\eta\in X$, the function $\xi$ defined by the formula \eqref{eq1.7} belongs to $D(\mathcal{A})$.
\end{proof}

Now, we introduce the definition of a mild solution to problem~\eqref{eq1.5}.

\begin{definition}\label{def_mild}
A function $\xi\in C([0,T];X)$ is called a \textbf{mild solution} of \eqref{eq1.5} if it satisfies the following equation:
\begin{equation}\label{mild}
\xi(t)=e^{t\mathcal{A}}\xi_0+\int_0^t e^{(t-s)\mathcal{A}}r(\xi(s))ds,\quad t\in[0,T].
\end{equation}
\end{definition}

The main result of this subsection reads as follows.

\begin{theorem}\label{th2}
For each initial function $\xi_0\in X$ and any given $T>0$, there exists a unique mild solution of problem \eqref{eq1.5} in the sense of Definition~\ref{def_mild}.
\end{theorem}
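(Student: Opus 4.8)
The plan is to recognize the integral equation \eqref{mild} as exactly the one treated in Theorem~\ref{LiYong}, applied with $\mathcal{A}$ the operator from \eqref{opA} and $f=r$. Lemma~\ref{th1} already supplies the first hypothesis of that theorem, namely that $\mathcal{A}$ generates a $C_0$--semigroup $\{e^{t\mathcal{A}}\}_{t\geqslant0}$ on $X$. Hence the whole proof reduces to verifying that the nonlinear operator $r$ fulfills the two conditions in \eqref{li_yong_cond}: a global Lipschitz estimate and a bound on $\|r(0)\|_X$. Once both hold, Theorem~\ref{LiYong} produces a unique $\xi\in C([0,T];X)$ satisfying \eqref{mild} for every $\xi_0\in X$ and every $T>0$, which is precisely the assertion of Theorem~\ref{th2} (and, in particular, for every $\xi_0\in D(\mathcal{A})$ as in \eqref{eq1.5}).

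First I would dispose of the easier second condition. Since $\text{Sat}_M(0)=0$, we get $r(0)=k\overline{C}_A^n-k(0+\overline{C}_A)^n=0$, so that $\|r(0)\|_X=0$ and the requirement $\|r(0)\|_X\leqslant\mu(t)$ is satisfied by any nonnegative $\mu$.

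The heart of the argument, and the step I expect to be the main obstacle, is the global Lipschitz estimate for $r$ on all of $X=L^2(0,l)$; this is exactly where the saturation function is indispensable. I would combine two pointwise facts: (a) $\text{Sat}_M$ is globally $1$-Lipschitz, i.e. $|\text{Sat}_M(a)-\text{Sat}_M(b)|\leqslant|a-b|$; and (b) since $|\text{Sat}_M(\xi(x))|\leqslant M$ while $\overline{C}_A\in C^2[0,l]$ is bounded on the compact interval $[0,l]$, the argument $\text{Sat}_M(\xi(x))+\overline{C}_A(x)$ ranges in a fixed bounded set independent of $\xi$, on which $s\mapsto s^n$ has a finite Lipschitz constant $L_n$. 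Putting (a) and (b) together gives the pointwise bound
\[
\big|(\text{Sat}_M(\xi_1(x))+\overline{C}_A(x))^n-(\text{Sat}_M(\xi_2(x))+\overline{C}_A(x))^n\big|\leqslant L_n\,|\xi_1(x)-\xi_2(x)|,
\]
and integrating over $(0,l)$ yields $\|r(\xi_1)-r(\xi_2)\|_X\leqslant kL_n\,\|\xi_1-\xi_2\|_X$. Thus \eqref{li_yong_cond} holds with the constant function $\mu(t)\equiv kL_n\in L^1(0,T)$.

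The delicate point inside step (b) is the behaviour of $s\mapsto s^n$ for noninteger $n$: its derivative $n\,s^{n-1}$ stays bounded on a compact set only if that set is bounded away from the origin (when $0<n<1$) or for all $n\geqslant1$. I would therefore quantify the range of the saturated argument carefully, using the nonnegativity and $C^2$-regularity of $\overline{C}_A$, so as to extract a uniform constant $L_n$; this is the only place where the arbitrary reaction order $n>0$ demands attention beyond the linear case $n=1$. With the two conditions of \eqref{li_yong_cond} in hand, the existence and uniqueness of the mild solution follow immediately from Theorem~\ref{LiYong}.
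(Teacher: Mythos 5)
Your overall route is the same as the paper's: both proofs reduce Theorem~\ref{th2} to Theorem~\ref{LiYong}, with Lemma~\ref{th1} supplying the semigroup generation, the observation $r(0)=0$ handling the second condition in \eqref{li_yong_cond}, and the only real work being the Lipschitz bound for $r$. The difference is purely in how that bound is obtained: the paper invokes the mean value theorem for nonlinear operators and computes the Gateaux derivative $r'(\xi;\varphi)=-kn\left(\text{Sat}_M(\xi)+\overline{C}_A\right)^{n-1}\varphi$, estimating its norm by $knl^{1/2}\left(M+\sup_{x}\overline{C}_A(x)\right)^{n-1}$, whereas you compose the $1$-Lipschitz property of $\text{Sat}_M$ with a Lipschitz constant $L_n$ of $s\mapsto s^n$ on the bounded range of the saturated argument. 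Your pointwise argument is more elementary and, for $n\geqslant1$, perfectly sound.

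However, the point you flag at the end is a genuine gap, and the repair you sketch cannot close it. For $0<n<1$ the map $s\mapsto s^n$ has unbounded derivative near $s=0$, so a uniform $L_n$ exists only if $\text{Sat}_M(\xi(x))+\overline{C}_A(x)$ stays uniformly away from $0$ for all $\xi\in X$ and $x\in[0,l]$. It does not: $\text{Sat}_M(\xi)$ ranges over all of $[-M,M]$, so the argument can be as small as $\overline{C}_A(x)-M$, which may be zero or even negative (nothing in the paper requires $M<\min_{x}\overline{C}_A(x)$); nonnegativity and $C^2$-regularity of $\overline{C}_A$ give no positive lower bound. Hence your proof, as written, covers only $n\geqslant1$, while the theorem is claimed for all $n>0$. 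It is worth noting that the paper's own proof harbors exactly the same defect, only less visibly: the estimate $\left(\text{Sat}_M(\cdot)+\overline{C}_A\right)^{2(n-1)}\leqslant\left(M+\sup_{x}\overline{C}_A(x)\right)^{2(n-1)}$ used there is valid only when the exponent $2(n-1)$ is nonnegative; for $n<1$ the power is decreasing in its argument, the supremum is governed by the smallest values of $\text{Sat}_M(\xi)+\overline{C}_A$, and the integral can blow up (or be undefined where the argument is nonpositive). So for sub-linear reaction orders both arguments need an additional hypothesis --- for instance $M<\min_{x\in[0,l]}\overline{C}_A(x)$, which keeps the saturated concentration strictly positive --- or a different treatment of the nonlinearity near zero.
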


\begin{proof}
The proof of this theorem consists of checking the conditions of Theorem~\ref{LiYong}. To check condition \eqref{li_yong_cond}, we use the mean value theorem for nonlinear operators (see e.g. \cite[Chapter XVII]{KantAkil}. Namely, for any differentiable nonlinear operator $P:Y\to Y$ and any $y_1,y_2\in Y$, the following inequality holds:
\begin{equation*}
\|P(y_1)-P(y_2)\|\leqslant\sup_{0<\theta<1}\|P'(y_1+\theta(y_2-y_1))\|\,\|y_2-y_1\|,
\end{equation*}
where $\|\cdot\|$ denotes the norm in $Y$. First, we find the Gateaux derivative of the operator $r$ at $\xi\in X$ in the direction $\varphi\in K$, where $K:=\{\varphi\in L^\infty(0,l):\|\varphi\|_{L^\infty(0,l)}\leqslant1\}$:
\begin{equation*}
r'(\xi;\varphi)=\lim_{s\to0}\frac{r(\xi+s\varphi)-r(\xi)}{s}
=-kn(\text{Sat}_M(\xi)+\overline{C}_A)^{n-1}\varphi.
\end{equation*}
Estimating the desired supremum of the norm of the derivative, we get:
{\small
\begin{equation*}
\begin{aligned}
\sup_{0<\theta<1}\|r'(\xi_1+\theta(\xi_2-\xi_1))\|_X
&=kn{\sup_{0<\theta<1}\left(\int_0^l\left(\text{Sat}_M\left(\xi_1+\theta(\xi_2-\xi_1\right)+\overline{C}_A\right)^{2(n-1)}dx\right)^\frac{1}{2}}
\\&\leqslant knl^\frac{1}{2}\left(M+\sup_{x\in[0,l]}\overline{C}_A(x)\right)^{n-1}=:\mu_r<\infty.
\end{aligned}
\end{equation*}}
By the mean value theorem and $r(0)=0$, condition \eqref{li_yong_cond} is validated. We complete the proof of the theorem by applying Lemma~\ref{th1} and Theorem~\ref{LiYong}.
\end{proof}

\subsection{Exponential stability of the steady-state solution}

\begin{theorem}\label{th3}
The trivial solution of problem \eqref{dftr1} with the feedback control~\eqref{control} is exponentially stable in the weighted Lebesgue space $L^2_\rho(0,l)$.
\end{theorem}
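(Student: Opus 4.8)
The plan is to build a weighted Lyapunov functional tailored to the convective structure of $\mathcal{A}$. I would fix the weight $\rho(x)=e^{-\beta x}$ with a free parameter $\beta\in\left(0,\frac{v}{D_{ax}}\right)$, endow $L^2_\rho(0,l)$ with the inner product $\langle f,g\rangle_\rho=\int_0^l e^{-\beta x}f(x)g(x)\,dx$ (whose norm is equivalent to the $L^2(0,l)$ norm since $e^{-\beta l}\le\rho\le1$), and consider
\[
V(t)=\tfrac12\,\|w(\cdot,t)\|_{L^2_\rho(0,l)}^2=\tfrac12\int_0^l e^{-\beta x}w^2(x,t)\,dx.
\]
The reason for this weight is that $e^{-\beta x}$ brings the principal part $D_{ax}w_{xx}-vw_x$ close to Sturm--Liouville form, so that the sign-indefinite first-order term is absorbed and the energy identity produces a definite zeroth-order term whose sign we can control through $\beta$.

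I would then differentiate $V$ along solutions of \eqref{dftr1}, insert $w_t$, and integrate by parts twice. The diffusive and convective contributions combine into
\[
\int_0^l e^{-\beta x}w\,(D_{ax}w_{xx}-vw_x)\,dx
=\frac{\beta(D_{ax}\beta-v)}{2}\int_0^l e^{-\beta x}w^2\,dx
-D_{ax}\int_0^l e^{-\beta x}w_x^2\,dx+B,
\]
where $B$ gathers the boundary traces at $x=0$ and $x=l$. The decisive observation is that for $\beta\in\left(0,\frac{v}{D_{ax}}\right)$ the factor $\beta(D_{ax}\beta-v)$ is strictly negative, so this step already supplies genuine dissipation of the $L^2_\rho$ norm, which the unweighted estimate of Lemma~\ref{th1} does not.

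Next I would verify $B\le0$. Since $w_x(l)=0$, the $x=l$ trace equals $\frac12(D_{ax}\beta-v)e^{-\beta l}w^2(l)\le0$; inserting the closed-loop relation $w_x(0)=\frac{v(1-\alpha)}{D_{ax}}w(0)$, which follows from substituting the feedback \eqref{control} into the boundary condition in \eqref{dftr1}, the $x=0$ trace becomes $-\left(v\left(\tfrac12-\alpha\right)+\tfrac{D_{ax}\beta}{2}\right)w^2(0)\le0$ because $\alpha\in\left[0,\tfrac12\right]$. For the reaction term I would exploit the monotonicity of $s\mapsto s^n$ on $[0,\infty)$: as $w$ and $\text{Sat}_M(w)$ share the same sign and $\text{Sat}_M(w)+\overline{C}_A\ge0$, the product $w\left[(\text{Sat}_M(w)+\overline{C}_A)^n-\overline{C}_A^n\right]\ge0$, whence $\int_0^l e^{-\beta x}w\,r(w)\,dx\le0$. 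Collecting these bounds and discarding the nonpositive gradient and boundary terms gives
\[
\dot V(t)\le\frac{\beta(D_{ax}\beta-v)}{2}\int_0^l e^{-\beta x}w^2\,dx=-\beta(v-D_{ax}\beta)\,V(t)=:-2\lambda V(t),
\]
so a comparison argument yields $\|w(\cdot,t)\|_{L^2_\rho}\le e^{-\lambda t}\|w(\cdot,0)\|_{L^2_\rho}$, that is, exponential stability; optimizing over $\beta$ at $\beta=\frac{v}{2D_{ax}}$ gives the rate $\lambda=\frac{v^2}{8D_{ax}}$.

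The main obstacle I anticipate is twofold. Analytically, the identity for $\dot V$ is only formal for a mild solution, so I would justify it either by first performing the computation for strong solutions with $\xi_0\in D(\mathcal{A})$, where $w(\cdot,t)\in H^2(0,l)$ and the traces $w(0,t),w_x(0,t),w_x(l,t)$ are well defined, and then passing to the limit by density via the continuous dependence furnished by Theorem~\ref{th2}. Structurally, the delicate point is selecting the weight so that the interior coefficient, both boundary traces, and the reaction term are simultaneously nonpositive; this is exactly what forces the constraint $\beta<\frac{v}{D_{ax}}$, relies on the admissible range $\alpha\in\left[0,\tfrac12\right]$, and requires the sign condition $\text{Sat}_M(w)+\overline{C}_A\ge0$ that links the saturation level $M$ and the steady state $\overline{C}_A$ to the physical bound $C_A\ge0$. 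Should one prefer the self-adjoint weight $\rho=e^{-vx/D_{ax}}$, for which the interior coefficient vanishes, the decay must instead be recovered from the retained gradient term $-D_{ax}\int_0^l\rho\,w_x^2\,dx$ by a Poincaré-type inequality $\|w\|_{L^2_\rho}^2\le C\left(\|w_x\|_{L^2_\rho}^2+w^2(0)\right)$.
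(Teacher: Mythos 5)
Your proposal is correct and follows essentially the same route as the paper's proof: the same weighted Lyapunov functional $\tfrac12\int_0^l e^{-\beta x}w^2\,dx$, the same optimal weight $\beta=\tfrac{v}{2D_{ax}}$, the same sign analysis of the boundary traces under $\alpha\in\left[0,\tfrac12\right]$, and the same monotonicity argument for the reaction term (the paper phrases it as $(a-b)(a^n-b^n)\geqslant0$ for $a,b\geqslant0$). The only differences are minor: you work with the saturated nonlinearity of the abstract formulation while the paper works with \eqref{dftr1} directly, you explicitly flag the mild-vs-strong solution justification that the paper leaves implicit, and your bookkeeping is tighter, so your norm decay rate $\tfrac{v^2}{8D_{ax}}$ actually improves the paper's stated $\tfrac{v^2}{16D_{ax}}$ by a factor of two (the paper bounds $\int_0^l\rho\,w^2\,dx\geqslant\mathcal{E}$ where it could use $\int_0^l\rho\,w^2\,dx=2\mathcal{E}$).
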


\begin{proof}
Define the energy functional for system~\eqref{dftr1} as the squared solution norm in the weighted Lebesgue space $L^2_\rho(0,l)$:
\begin{equation}\label{energy}
   \mathcal{E}=\frac{1}{2}\int_0^l \rho(x)w^2(x,t)dx,
\end{equation}
where the weight $\rho\in L^\infty(0,l)$ is a non-negative function to be defined later.

Using integration by parts, we compute the time derivative of the energy along the trajectories of problem~\eqref{dftr1}:
\begin{equation*}\label{dot_energy}
\begin{aligned}
\dot{\mathcal{E}}=&\int_0^l \rho(x)w(x,t)\dot{w}(x,t)dx
=\int_0^l \rho\,w\left[D_{ax}w'' -vw'+k\overline{C}_A^n -k (w+\overline{C}_A)^n\right]dx
\\=&-v(1-\alpha)\rho(0)w^2(0,t)
-\frac{D_{ax}}{2}\int_0^l\rho'(w^2)' dx 
-D_{ax}\int_0^l\rho(w')^2 dx-\frac{v}{2}\int_0^l\rho(w^2)'dx
\\&-k\int_0^l\rho\left(\overline{C}_A-(w+\overline{C}_A)\right)\left(\overline{C}_A^n-(w+\overline{C}_A)^n\right)dx.
\end{aligned}
\end{equation*}
Since we are interested in non-negative solutions to the problem \eqref{dftr}, we can state that $\overline{C}_A\geqslant0$ and $w+\overline{C}_A=C_A\geqslant0$ for almost all $(x,t)\in(0,l)\times(0,T)$. Using the fact that $(a-b)(a^n-b^n)\geqslant0$ for any positive numbers $a,b,n$, we get:
\begin{equation*}\label{dot_energy}
\dot{\mathcal{E}}\leqslant-v(1-\alpha)\rho(0)w^2(0,t)
-\frac{D_{ax}}{2}\int_0^l\rho'(w^2)'dx-\frac{v}{2}\int_0^l\rho(w^2)'dx.
\end{equation*}
We take $\rho$ as a solution to the problem
\begin{equation}\label{weight}
\rho'(x)=-\gamma\rho(x),\quad x\in[0,l],\quad \rho(0)=\rho_0>0
\end{equation}
with some positive constant $\gamma$. Choosing $\gamma=\frac{v}{2D_{ax}}$, we obtain:
\begin{equation*}\label{dot_energy}
\begin{aligned}
\dot{\mathcal{E}}&\leqslant-v(1-\alpha)\rho_0w^2(0,t)-\frac{v}{4}\int_0^l\rho(w^2)'dx
=-v(1-\alpha)\rho_0w^2(0,t)-\frac{v}{4}\rho\,w^2\big|_0^l+\frac{v}{4}\int_0^l\rho'w^2dx
\\&=-v\left(\frac{3}{4}-\alpha\right)\rho_0w^2(0,t)-\frac{v}{4}\rho(l)w^2(l,t)
-\frac{v^2}{8D_{ax}}\int_0^l\rho\,w^2dx
\leqslant -\frac{v^2}{8D_{ax}}\int_0^l\rho\,w^2dx.
\end{aligned}
\end{equation*}
As a result, we get the differential inequality:
\begin{equation}\label{exp_st}
\dot{\mathcal{E}}\leqslant -\frac{v^2}{8D_{ax}}\mathcal{E},
\end{equation}
which proves Theorem~\ref{th3}.
\end{proof}

\begin{remark}
It is easy to show that exponential stability holds for any $\gamma\in\left(0,\frac{v}{D_{ax}}\right)$, as defined in~\eqref{weight}.
Note that weighted $L^2$-norms have been effectively used in the construction of Lyapunov functionals for hyperbolic systems, e.g., in~\cite{bastin2016stability,zuyev2021stabilization}. Here, we have extended this approach for the nonlinear parabolic case. 
\end{remark}

\begin{remark}
Using the differential inequality~\eqref{exp_st}, we derive the following estimate:
\begin{equation}\label{decay-rate}
||w(\cdot,t)||_{L^2_\rho(0,l)} \leqslant 
e^{-\frac{v^2}{16D_{ax}}t}\,||w(\cdot,0)||_{L^2_\rho(0,l)}, \ t\in[0,T].
\end{equation}
Let $\lambda$ denote the exponential decay rate of the solutions. We then have the following lower bound: 
\begin{equation}\label{decay-rate_lambda}
\lambda \geqslant \lambda_T := \frac{v^2}{16D_{ax}},
\end{equation}
where $\lambda_T$ represents the theoretically computed decay rate. It is important to note that the above estimate is global and does not account for the dependence of the individual decay rate for a particular solution on the reaction order $n$
 the rate constant $k$, and the control gain $\alpha$. The sharpness of the obtained bound~\eqref{decay-rate} remains an open theoretical question, with some numerical analysis presented in Section~\ref{sec_num}.
\end{remark}

\section{Numerical simulations}\label{sec_num}

This section contains the results of numerical simulations of problem~\eqref{dftr1} with the control design~\eqref{control} for different values of the reaction order $n$ and feedback gain coefficient $\alpha$.

We choose the following values of physical parameters:
\begin{equation}\label{values}
\begin{aligned}
&k=0.001\ s^{-1}mol^{-1},\quad v=0.01\ m\,s^{-1},\quad T=400\, s,\quad l=1\ m, \quad Pe=4,
\end{aligned}
\end{equation}
where $Pe$ is the P\'eclet number which shows the relation between diffusion and advection time and is defined by the formula 
\begin{equation*}
Pe=\frac{v\,l}{D_{ax}}.
\end{equation*}

To satisfy the boundary conditions, we take the initial function $w(x,0)$ in the form
\begin{equation}\label{initial}
w(x,0)=-\frac{(x-l)^2}{2}+l\frac{lv(1-\alpha)+2D_{ax}}{2v(1-\alpha)}.
\end{equation}

In the case $n=1$, the steady-state solution of problem~\eqref{dftr-ss} is obtained by solving the corresponding linear differential equation:
\begin{equation*}
\overline{C}_A(x)=C_5e^{\frac{v+q}{2D_{ax}}\,x}+C_6e^{\frac{v-q}{2D_{ax}}\,x},
\end{equation*}
where $q=\sqrt{v^2+4D_{ax}k}$, and the constants $C_5$, $C_6$ are chosen to match the boundary conditions:
\begin{equation*}
C_5=-\overline{u}\,\frac{2v(v-q)}{(v+q)^2e^{\frac{q\,l}{D_{ax}}}-(v-q)^2},
\qquad C_6=\overline{u}\,\frac{2v(v+q)}{(v+q)^2-(v-q)^2e^{-\frac{q\,l}{D_{ax}}}}.
\end{equation*}

The 3D plot of the solution of $w(x,t)$ of problem~\eqref{dftr1} with parameters~\eqref{values}, reaction order $n=1$, and $\alpha=0$ is shown in Fig.~\ref{fig_1}. These simulations are performed in MATLAB R2019b with the use of the \texttt{bvp4c} and the \texttt{pdepe} solvers.

\begin{figure}[h!]
\centering\includegraphics[width=0.6\linewidth]{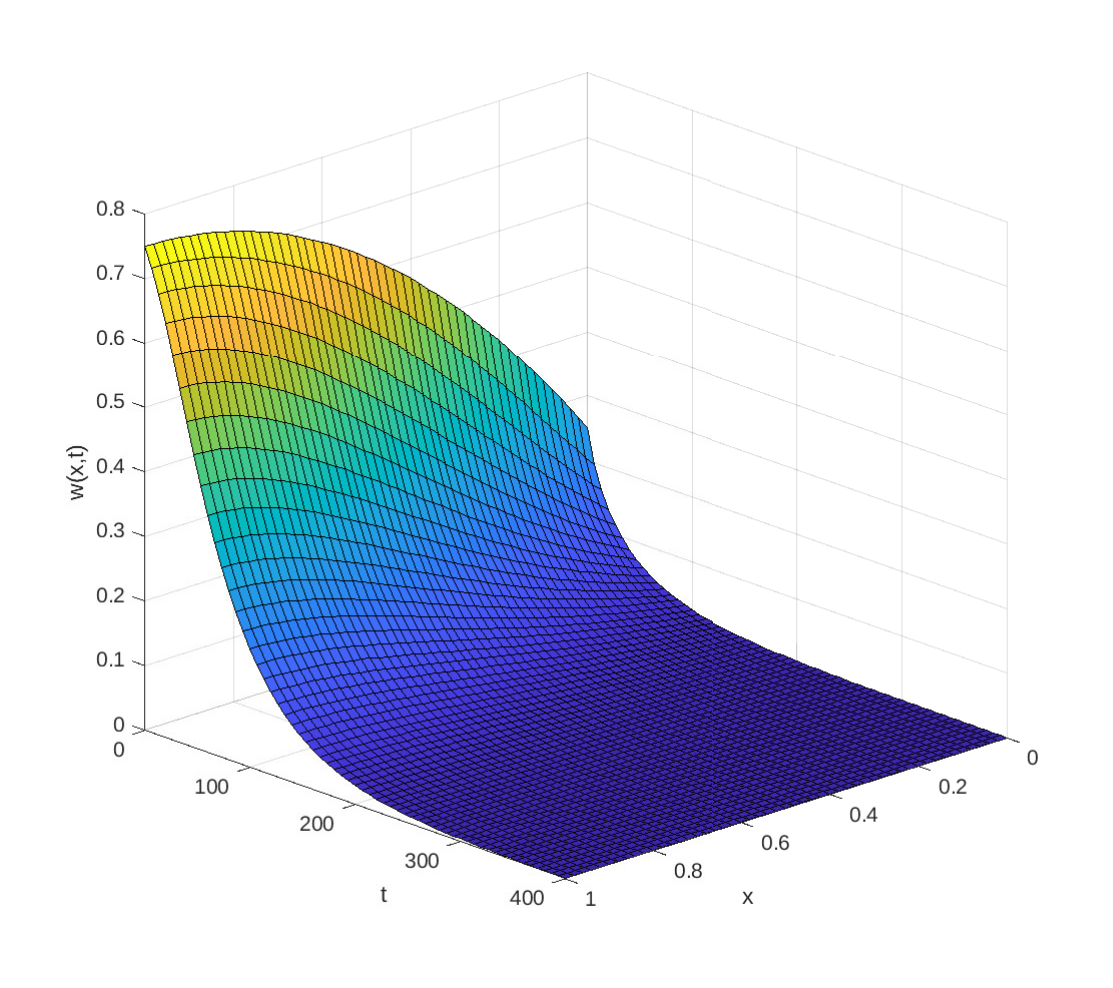}
\vskip-4ex
\caption{3D plot of $w(x,t)$ for $n=1$, $\alpha=0$.} \label{fig_1}
\end{figure}

The solution profiles for $n=\frac12$ and $n=2$, and the corresponding control functions defined by~\eqref{control} are presented in Fig.~\ref{fig_2} and Fig.~\ref{fig_3}, respectively.

\begin{figure}[H]
\begin{minipage}[h]{0.47\linewidth}
\subfloat[Control function $u_w(t)$]{
\includegraphics[width=\linewidth]{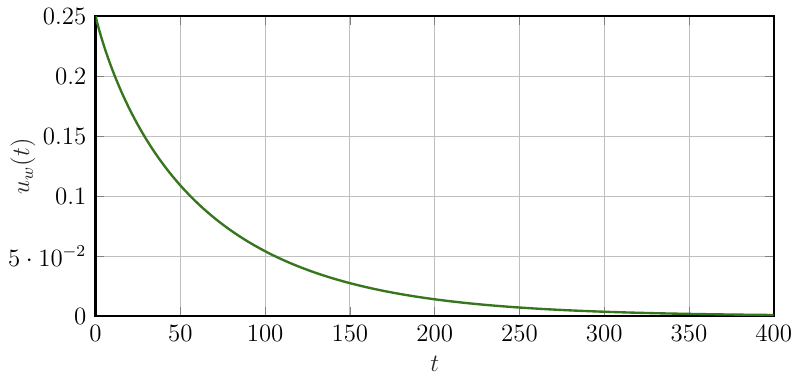}
}
\hspace{0mm}
\subfloat[Graphs of $x$-profiles of $w(x,t)$ at $t=0$ (blue), $t=100$ (orange), $t=200$ (yellow), and $t=300$ (purple)]{
\includegraphics[width=\linewidth]{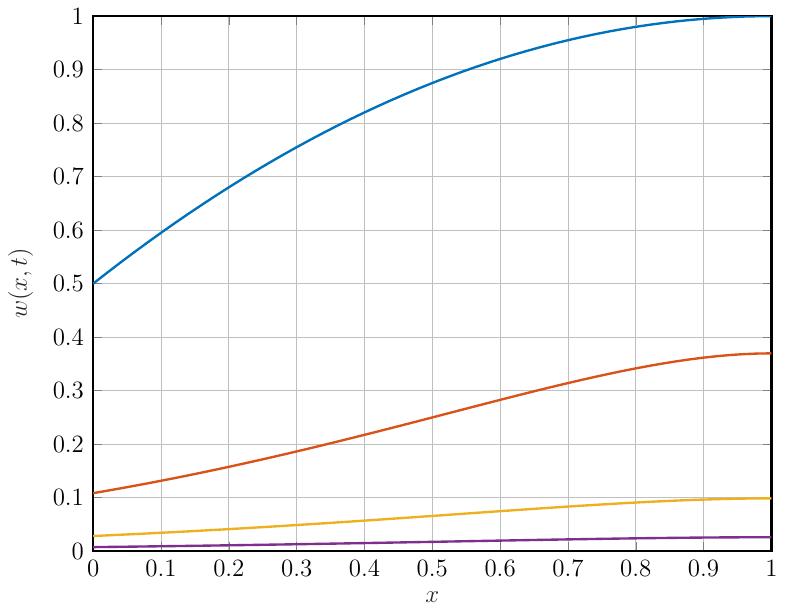}
}
\caption{Case $n=\frac{1}{2}$ and $\alpha=\frac12$.}
\label{fig_2}
\end{minipage}
\hfill
\begin{minipage}[h]{0.47\linewidth}
\subfloat[Control function $u_w(t)$]{
\includegraphics[width=\linewidth]{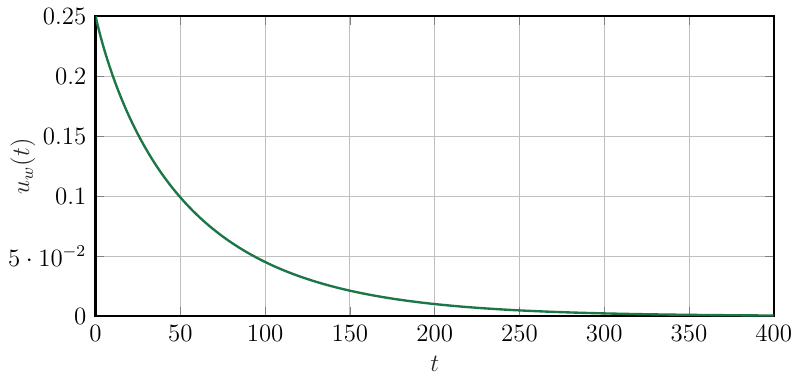}
}
\hspace{0mm}
\subfloat[Graphs of $x$-profiles of $w(x,t)$ at $t=0$ (blue), $t=100$ (orange), $t=200$ (yellow), and $t=300$ (purple)]{
\includegraphics[width=\linewidth]{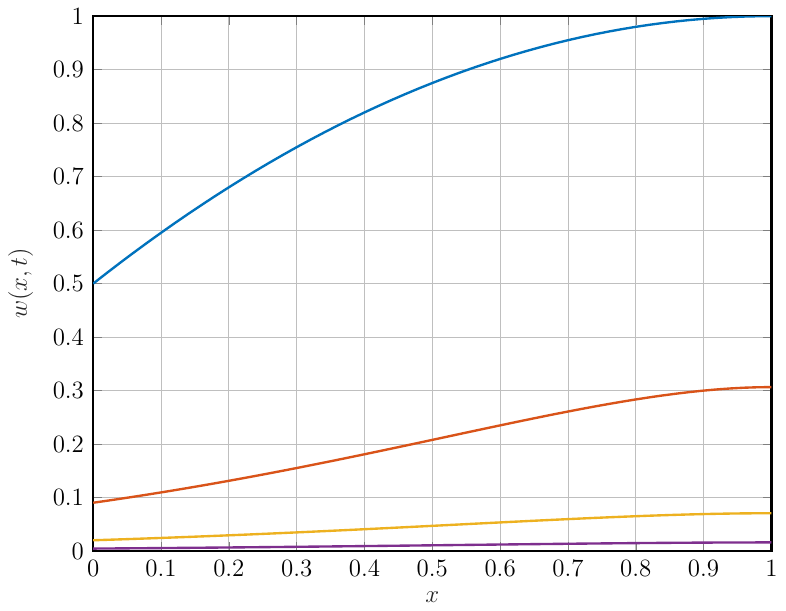}
}
\caption{Case $n=2$ and $\alpha=\frac12$.}
\label{fig_3}
\end{minipage}
\end{figure}

The presented plots illustrate the decay of solutions $w(x,t)$ for large values of $t$. To estimate the behavior of $w(x,t)$ over time, we use standard MATLAB R2019b functions to approximate the decay rate constant as follows:
\begin{equation}\label{lambda_N}
\lambda_N = -\limsup_{t\to\infty} \frac{1}{t}\log \frac{\|w(\cdot,t)\|_{L^2_\rho(0,l)}}{\|w(\cdot,0)\|_{L^2_\rho(0,l)}},
\end{equation}
within the time horizon $t\in[0\,s,7000\,s]$ using a discretization step of $1\,s$.

Table~\ref{table} summarizes the results of numerical computations of the decay rate constant $\lambda_N$ over the range $n\in\{\frac12, 1, 2, 10\}$ and $\alpha\in\{0,\frac14,\frac12\}$. These results enable us to compare the numerical values with the theoretical decay rate, defined in \eqref{decay-rate_lambda} as $\lambda_T = 0.0025$ for the specific parameters given in~\eqref{values}.

\begin{table}[!h]
\caption{Estimated decay-rate $\lambda_N$ for different $\alpha$ and $n$}
\label{table}
\begin{center}
\begin{tabular}{| c |c c c|} 
 \hline
\diaghead(-2,1){aaaaaaa}%
{$\boldsymbol{n}$}{$\boldsymbol{\alpha}$} & $\boldsymbol{0}$ & $\boldsymbol{1/4}$ & $\boldsymbol{1/2}$ \\ [0.5ex] 
 \hline
 $\boldsymbol{1/2}$ & $0.0045$ & $0.0036$ & $0.0039$ \\ 
 \hline
 $\boldsymbol{1}$ & $0.0038$ & $0.0037$ & $0.0035$ \\
 \hline
 $\boldsymbol{2}$ & $0.0042$ & $0.0030$ & $0.0026$ \\%[1ex] 
 \hline
  $\boldsymbol{10}$ & $0.0039$ & $0.0030$ & $0.0027$ \\%[1ex] 
 \hline
\end{tabular}
\end{center}
\end{table}

\section{Conclusion}

The mathematical model of $n^{th}$-order chemical reactions of the type ``$A \to $ product'' carried out in a dispersed flow tubular reactor (DFTR) has been studied. The dynamics are described by a nonlinear parabolic partial differential equation with feedback boundary control applied through variations of the inlet concentration. The abstract problem in operator form is presented and studied. A feedback control design is proposed to ensure the exponential stability of the steady-state solution to the problem. The existence and uniqueness of the solution to the initial value problem are proved under the proposed control. 

The presented simulation results confirm the exponential decay of $w(x,t)$ for large values of $t$. Considering the chosen physical parameters, the profile of  $w(x,t)$ closely approximates the steady-state value at  $t=400\,s$, within an acceptable numerical tolerance.

The estimates of $\lambda_N$, presented in Table~\ref{table}, exhibit nonlinear behavior with respect to each of the parameters $n$ and $\alpha$ over the given range. It is important to note that the theoretical decay rate estimate $\lambda_T=\frac{v^2}{16D_{ax}}$, obtained from the differential inequality~\eqref{decay-rate}, provides a lower bound for the estimates $\lambda_N$ in~\eqref{lambda_N}.
Up to this point, there is no numerical evidence to indicate how accurate or sharp the estimate 
$\lambda_T$ in~\eqref{exp_st} is, as the data of Table~\ref{table} only addresses solutions with the specified initial condition~\eqref{initial}.
 We leave the analysis of the sharpness of  $\lambda_T$ for further studies. Another prospective topic for future research could focus on the input-to-state stability analysis of the problem expressed in equation~\eqref{dftr}.

\addcontentsline{toc}{section}{References}
%\bibliographystyle{gost2008}
%\bibliographystyle{plainurl}
%\bibliography{ref.bib}

\end{document}